\newtheorem*{rep@theorem}{\rep@title}
\newcommand{\newreptheorem}[2]{\newtheorem*{rep@#1}{\rep@title}
\newenvironment{rep#1}[1]{\def\rep@title{#2 \ref*{##1}}
\begin{rep@#1}}
{\end{rep@#1}}} 
\theoremstyle{plain}
  \newtheorem{thm}{Theorem}[section]
  \newtheorem*{thm*}{Theorem}
  \newtheorem{lem}[thm]{Lemma}
  \newtheorem{cor}[thm]{Corollary}
\theoremstyle{definition}
\theoremstyle{remark}
  \newtheorem{remark}[thm]{Remark}
\numberwithin{equation}{section}
\begin{document}

\title[A $2$-complex with contracting non-positive immersions]{A $2$-complex with contracting non-positive immersions and positive maximal irreducible curvature}

\author{Mart\'in Axel Blufstein and El\'ias Gabriel Minian}

\address{Departamento  de Matem\'atica - IMAS\\
 FCEyN, Universidad de Buenos Aires. Buenos Aires, Argentina.}
\email{mblufstein@dm.uba.ar}
\email{gminian@dm.uba.ar}

\thanks{Researchers of CONICET. Partially supported by grants PICT 2019-2338 and UBACYT 20020190100099BA}

\subjclass[2020]{20F65, 20F05, 20F67, 57M07, 57K20.}

\keywords{Non-positive immersions, irreducible curvature.}

\begin{abstract}
We prove that the $2$-complex associated to the presentation $\langle a,b \mid b,bab^{-1}a^{-2}\rangle$ has contracting non-positive immersions and  positive maximal irreducible curvature. This example shows that the contracting non-positive immersions property is not equivalent to the notion of non-positive irreducible curvature, answering a question raised by H. Wilton.
\end{abstract}

\maketitle


\section{Introduction}

In \cite{Wis03,Wis04} D. Wise introduced the notions of sectional curvature and non-positive immersions for $2$-complexes in order to study properties of their fundamental groups such as coherence and local indicability. In \cite{Wis22} he investigated some variations of the non-positive immersions property.  Here we are concerned with the contracting variant. Recall that an immersion $Y\looparrowright X$ is a map between $2$-complexes that is locally injective. A $2$-complex $X$ has \emph{contracting non-positive immersions} if for any immersion  $Y\looparrowright X$ with $Y$ connected and compact, either $\chi(Y)\leq0$ or $Y$ is contractible. Here  $\chi(Y)$ denotes the Euler characteristic of $Y$. One of the main tools used in \cite{Wis03,Wis04,Wis22} is the theory of towers, due to J. Howie \cite{How81}. Wise remarked that the proofs of the non-positive immersions property for some $2$-complexes are rather ad-hoc and asked whether there exists an algorithm to recognize if a compact $2$-complex has this property \cite[Problem 1.9]{Wis22}.

More recently H. Wilton \cite{Wil22a} defined new curvature invariants of $2$-complexes. These invariants are related to Wise's previous notions although Wilton's approach and techniques are different. The \emph{average curvature} $\kappa(X)$ of a finite $2$-complex $X$ is defined as $\kappa(X)=\frac{\chi(X)}{Area(X)}$ where $Area(X)$ is the number of $2$-cells, and the \emph{maximal irreducible curvature} $\rho_+(X)$ is the supremum of the curvatures $\kappa(Y)$ among all finite irreducible branched $2$-complexes $Y$ admitting essential maps $Y\to X$ (see \cite{Wil22a} for more details). In  \cite{Wil22b} it is proved that, when $X$ is irreducible, this supremum is attained  and  is algorithmically computable. Wilton showed that if $X$ has non-positive irreducible curvature (i.e.  $\rho_+(X)\leq 0$), then it has contracting non-positive immersions, and asked whether there is an example of a $2$-complex with contracting non-positive immersions with $\rho_+(X)>0$.

We give here an example of such a $2$-complex. The presentation $P = \langle a,b \mid b, bab^{-1}a^{-2}\rangle$ belongs to a family of balanced presentations of the trivial group introduced by Miller and Schupp \cite{MS99}. It is clear that its associated $2$-complex $K_P$ is contractible. This example appeared in Wilton's paper \cite[Example 4.8]{Wil22a} where it is proved that $K_P$ is irreducible and therefore $\rho_+(K_P)\geq \kappa(K_P)=\frac12$. In \cite{Fis22}, W. Fisher investigated Miller and Schupp's family of examples and showed that some of them fail to have the non-positive immersions property. However, his methods could not conclude for  $P = \langle a,b \mid b, bab^{-1}a^{-2}\rangle$. We prove the following.

\begin{thm}\label{main}
	The $2$-complex $K_P$ associated to the presentation $\langle a,b \mid b, bab^{-1}a^{-2}\rangle$ has contracting non-positive immersions.
\end{thm}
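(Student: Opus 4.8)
The plan is to prove Theorem~\ref{main} by showing that every immersion $Y\looparrowright K_P$ with $Y$ compact and connected satisfies $\chi(Y)\le 0$ unless $Y$ is contractible, arguing by contradiction from a minimal counterexample. So suppose $\chi(Y)\ge 1$ and $Y$ is not contractible, and choose such a $Y$ with the fewest $2$-cells. Since an elementary collapse of a free face, or of a dangling edge at a valence-one vertex, preserves the homotopy type and sends $Y$ to a subcomplex of itself (so the composite map to $K_P$ is still an immersion), I may assume $Y$ is \emph{reduced}: no free faces, and every vertex has valence at least two. The first real input is the local geometry forced by the immersion. A routine inspection of the two relators shows that the vertex link of $K_P$ is the complete graph $K_4$ on the four oriented half-edges $o(a),t(a),o(b),t(b)$, the monogon $b$ contributing one of the six edges and the pentagon $bab^{-1}a^{-2}$ the remaining five. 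Hence for every vertex $y$ of $Y$ the link $\mathrm{Lk}(y)$ embeds as a \emph{simple} subgraph of $K_4$, so $y$ carries at most one half-edge of each of the four types and at most one corner of each of the six types. This yields the clean bounds $E_a\le V$, $E_b\le V$, and $F_1\le V$ (where $F_1,F_2$ count monogons and pentagons), and in particular at most one monogon is attached at each vertex.

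The second step is to exploit the special relator $b$. Because $b$ bounds the monogon $f_1$, any $2$-cell of $Y$ over $f_1$ is a disk attached along a $b$-edge that must be a \emph{loop}, and $b$ is null-homotopic in $K_P$. I would then pass to a maximal tower lift in the sense of Howie \cite{How81}; since $\pi_1(K_P)=1$, the covering steps of the tower are trivial until it descends to a proper subcomplex, and the only essential two-dimensional such subcomplex is the presentation complex $X'=\langle a,b\mid bab^{-1}a^{-2}\rangle$ of $BS(1,2)$, which is aspherical (its relator is not a proper power). This localizes all the genuine group-theoretic complexity into immersions over the aspherical one-relator complex $X'$, whose combinatorics I control through its structure as an ascending HNN extension of $\mathbb{Z}$ along $a\mapsto a^2$, while the monogons are handled separately as caps on $b$-loops.

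For the bookkeeping I would run combinatorial Gauss--Bonnet. With the zero-angle assignment one gets the identity $2\pi\chi(Y)=\pi F_1-3\pi F_2+\sum_y\bigl(2\pi-\pi\chi(\mathrm{Lk}(y))\bigr)$, so all positive curvature sits on the monogons and the low-valence vertices while each pentagon carries $-3\pi$. The heart of the argument is a discharging scheme that moves the $+\pi$ of each monogon and the excess at each vertex into the adjacent pentagons along the two $b$-sides of the word $bab^{-1}a^{-2}$; the constraint $\mathrm{Lk}(y)\subseteq K_4$ caps how many monogons and corners can pile up at a vertex and so guarantees that these charges can be absorbed, except when $Y$ has already become a single disk. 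I would show that any residual positive curvature forces, via the monogon collapse of Step~2, a free face in the $X'$-part of the reduced complex, contradicting reducedness — leaving only the case where $Y$ is a disk, hence contractible.

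The main obstacle is precisely this last interaction between the monogon and the two occurrences of $b$ in the pentagon. Since $\rho_+(K_P)=\tfrac12>0$, no angle assignment can make the curvature non-positive everywhere, so the argument cannot be purely metric: the topological input that $b$ is null-homotopic must be woven into the curvature accounting. The delicate point is to make the discharging compatible with the tower reduction, so that absorbing a monogon's curvature corresponds exactly to an admissible collapse, and to rule out the process terminating at a closed, positively curved configuration rather than at a point. This is where the specific arithmetic of $bab^{-1}a^{-2}$ — the exponent $2$ on $a$, forcing the one-relator piece to be the aspherical $BS(1,2)$ complex — is essential, and is the step I expect to require the most care.
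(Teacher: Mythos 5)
Your setup is fine --- the link of the unique vertex of $K_P$ is indeed the complete graph $K_4$ on the four half-edges, so links of vertices in any immersed $Y$ embed in $K_4$, and the zero-angle Gauss--Bonnet identity you write down is correct. But the heart of your argument, the discharging scheme in Step 3, is not carried out, and its intended conclusion is false as stated. You aim to show that after absorbing the $+\pi$ of each monogon into adjacent pentagons, ``any residual positive curvature forces a free face,'' leaving only the case of a single disk. There is, however, an infinite family of finite, reduced immersions $Y\looparrowright K_P$ with no free faces, every vertex of valence $4$ with link all of $K_4$, and $\chi(Y)=1$, which are not disks: these are the complexes $C_i$ built by repeatedly coupling pentagons to a monogon along the $b$-edges and then closing up (each $C_i$ turns out to be homeomorphic to $K_P$ itself, hence contractible, but it has $i$ vertices, $2i$ edges and $i+1$ two-cells). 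Any correct proof must detect that these --- and only these --- configurations occur, and must prove them contractible by an explicitly topological argument; as you yourself observe, $\rho_+(K_P)=\tfrac12>0$ means no angle assignment or local charge-pushing can certify non-positivity, so the discharging cannot close the argument on its own. Your final paragraph defers exactly this step (``the step I expect to require the most care''), which is where the entire content of the theorem lies.

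The tower reduction in Step 2 also does not deliver what you want. A maximal tower lift of $Y\to K_P$ is trivial whenever $Y\to K_P$ is surjective and $\pi_1$-surjective, which holds automatically for the problematic immersions (they use both $2$-cells and both edges, and $\pi_1(K_P)=1$); so you cannot localize the difficulty into the aspherical one-relator subcomplex of $bab^{-1}a^{-2}$. The published argument proceeds quite differently: immersions using only one type of $2$-cell are handled by the Helfer--Wise/Louder--Wilton non-positive immersion theorem for one-relator complexes, and for immersions using both types and having no free faces one grows an explicit sub-immersion by coupling $2$-cells one at a time, proving a rigidity lemma which forces $X$ to equal one of the contractible complexes $C_i$ (or its mirror $\tilde C_i$). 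If you want to salvage your approach, you would need to replace the discharging endgame with an argument of this combinatorial-rigidity type; as written, the proposal is a plausible plan with a gap precisely at the decisive step.
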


Our approach uses techniques developed by Wilton in \cite{Wil22a} and by Louder and Wilton in \cite{LW20}, where the immersions are built by performing foldings to the $2$-complexes.


\section{The Proof}
We work in the category of combinatorial $2$-complexes and combinatorial maps (which send (open) $n$-cells homeomorphically to $n$-cells). We will use the notion of folding of $2$-complexes introduced by Louder and Wilton in \cite{LW20}, which is a natural generalization of Stallings' foldings \cite{Sta83}.
Given a combinatorial map between  $2$-complexes $X\to Y$, we first fold the $1$-skeleton of $X$ and then glue the $2$-cells accordingly to obtain an immersion $Z\looparrowright Y$.

Given an immersion $X\looparrowright K_P$, the $2$-cells of $X$ which are mapped to the cell corresponding to the relator $b$ are called of \emph{type 1}, and the $2$-cells mapped to the cell corresponding to $bab^{-1}a^{-2}$ are of \emph{type 2}. 
By the work of Helfer and Wise \cite{HW16} and, independently, by Louder and Wilton \cite{LW17}, it is known that torsion-free one-relator groups have contracting non-positive immersions.
Thus, if an immersion $X\looparrowright K_P$ has only  $2$-cells of type $1$ or of type $2$, then $\chi(X)\leq 0$ or $X$ is contractible.
Therefore, we can reduce our study to immersions that use both types of $2$-cells.
We can also consider only immersions without free faces, since collapsing a free face does not change the homotopy type.
We will show that any such immersion is actually contractible.

In what follows, \emph{coupling} a $2$-cell to an immersion $X\looparrowright K_P$ will mean the immersion obtained by gluing a closed $2$-cell along an edge to $X$ and folding. From now on, the immersions $X\looparrowright K_P$ will be denoted with single letters (see the definitions of immersions $D_i$ and $C_i$ below). The immersion will be specified by giving orientations and  labelings to the $1$-skeleton of the domain.

We begin by defining two families of immersions $D_i$ and $C_i$. Start with the immersion of a single $2$-cell of type $1$.
We can couple a $2$-cell of type $2$ to this single $2$-cell along the first $b$ of $bab^{-1}a^{-2}$, or the second $b$ (which appears as $b^{-1}$).
We will couple it along the second appearance (the following analysis is completely analogous if we decide to start with the first appearance).
We get a new disc that also has a free face labeled by $b$ (see Figure \ref{figone}).

\begin{figure}[ht]
\begin{tikzpicture}
\node at (0,0) {\includegraphics[scale=0.5]{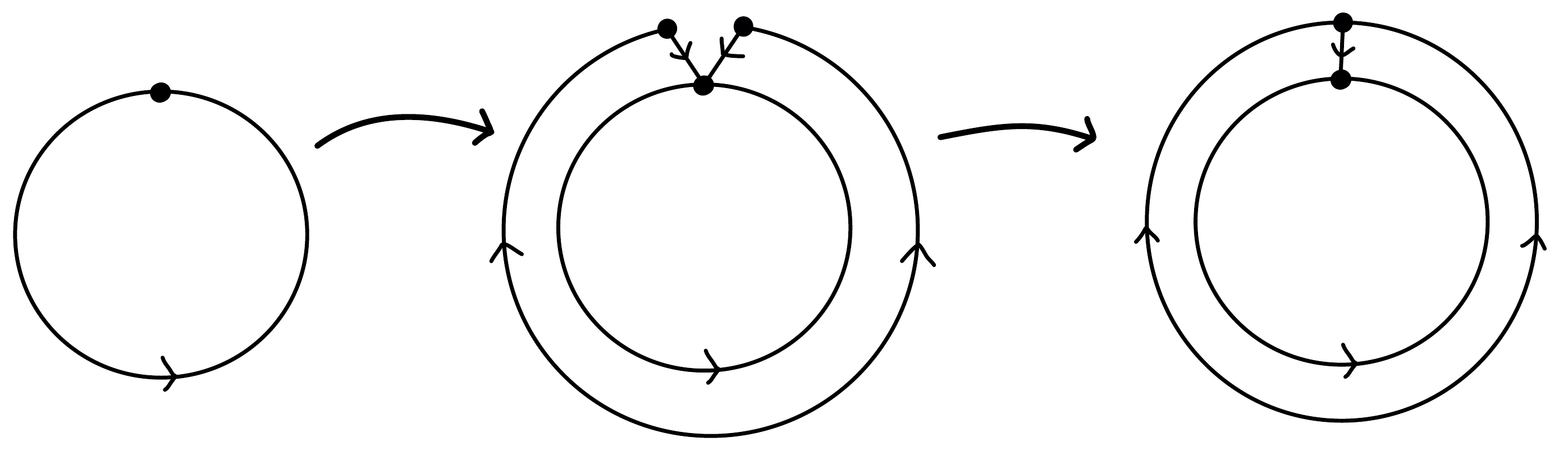}};

\node at (-5.55,-1.68) {$b$};

\node at (-0.65,-1.6) {$b$};
\node at (1.52,-0.26) {$b$};
\node at (-2.8,-0.26) {$a$};
\node at (-0.2,1.45) {$a$};
\node at (-1.2,1.45) {$a$};

\node at (7.1,-0.2) {$b$};
\node at (5.05,-1.52) {$b$};
\node at (3,-0.1) {$a$};
\node at (5.35,1.55) {$a$};
\end{tikzpicture}
\caption{Coupling a $2$-cell of type $2$ to a $2$-cell of type $1$.}
\label{figone}
\end{figure}

We can once again couple a $2$-cell of type $2$ to this immersion along the second appearance of $b$.
Again, we arrive to a disc with a free face labeled by $b$.
We can continue with this process and obtain diagrams $D_0,D_1,D_2,\ldots$ as in Figure \ref{figtwo}.

\begin{figure}[ht]
\begin{tikzpicture}
\node at (0,0) {\includegraphics[scale=0.5]{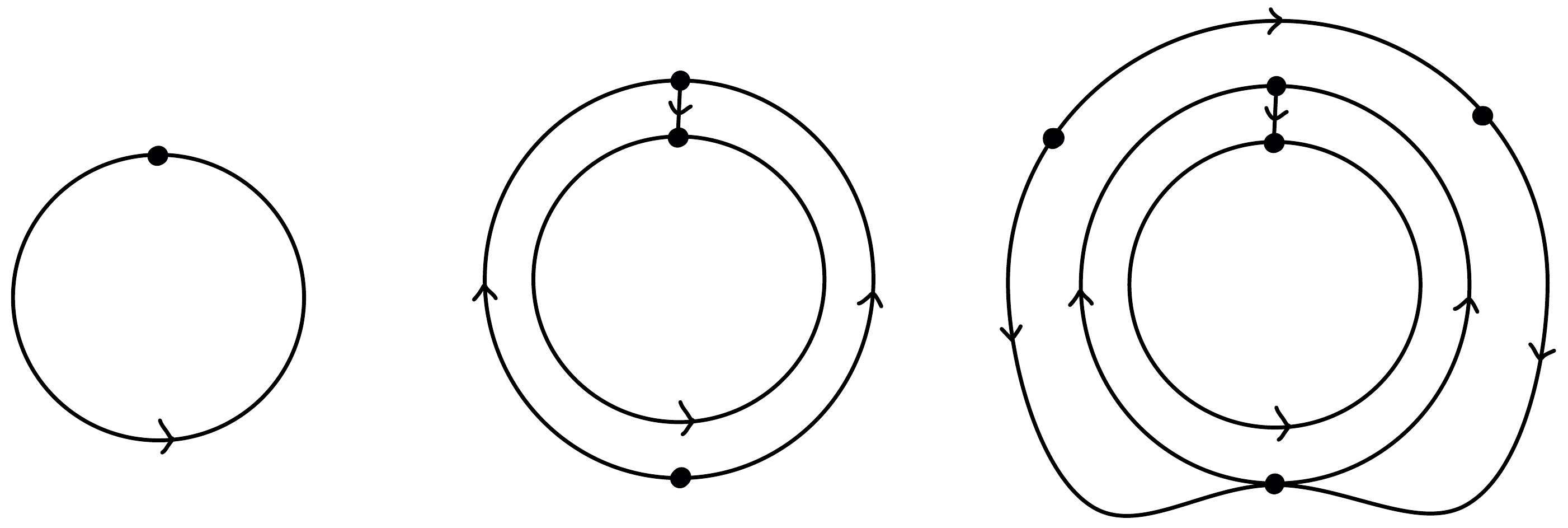}};

\node at (-6.9,1.8) {\Large{$D_0$}};
\node at (-2.6,2.3) {\Large{$D_1$}};
\node at (2.5,2.7) {\Large{$D_2$}};

\node at (-5.7,1.25) {$v_0$};
\node at (-5.6,-1.9) {$b_0$};

\node at (-1,0.85) {$v_0$};
\node at (-0.93,1.92) {$v_1$};
\node at (-1,-2.2) {$v_2$};
\node at (-1,-1.65) {$b_0$};
\node at (1.1,-0.3) {$b_1$};
\node at (-0.6,1.35) {$a_1$};
\node at (-3.05,-0.3) {$a_2$};

\node at (4.4,0.8) {$v_0$};
\node at (4.45,1.85) {$v_1$};
\node at (4.45,-2.25) {$v_2$};
\node at (6.65,1.5) {$v_3$};
\node at (2.15,1.3) {$v_4$};
\node at (4.4,-1.7) {$b_0$};
\node at (6.55,-0.39) {$b_1$};
\node at (1.8,-0.8) {$b_2$};
\node at (4.8,1.35) {$a_1$};
\node at (2.3,-0.3) {$a_2$};
\node at (7.2,-0.9) {$a_3$};
\node at (4.4,2.5) {$a_4$};
\end{tikzpicture}
\caption{Discs $D_0$, $D_1$ and $D_2$ with their labelings.}
\label{figtwo}
\end{figure}

We number the labels of the edges of the $D_i$.
The $b$ in $D_0$ is labeled by $b_0$.
At each step we are adding two $a$'s and one $b$.
The $b$ added in step $i$ is labeled by $b_i$, and the $a$'s by $a_{2i-1}$ and $a_{2i}$ as shown in Figure \ref{figtwo}.
In addition, we label the vertices so that the edge labeled by $a_i$ goes from $v_i$ to $v_{i-1}$.
With this numbering, we can give a more explicit description of the $1$-skeleton of $D_i$. It has:
\begin{itemize}
    \item $2i+1$ vertices $v_0,\ldots,v_{2i}$;
    \item $2i$ oriented edges $a_1,\ldots,a_{2i}$, where $a_j$ goes from $v_j$ to $v_{j-1}$;
    \item $i+1$ oriented edges $b_0,\ldots,b_i$, where $b_j$ goes from $v_{2j}$ to $v_j$.
\end{itemize}

Now we define the immersions $C_i$ for $i\geq 1$.
The $2$-complex $C_i$ is obtained from $D_i$ by identifying $b_i$ with $b_0$. Consider first the case $i=1$. Note that in that case we must identify also $a_1$ with $a_2$ in order to obtain an immersion (see Figure \ref{figthree}). With these identifications we get $C_1=K_P$, which is contractible. In general, when $i$ is odd, we must identify $a_{2i-k}$ with $a_{i-k}$ for all $0\leq k \leq i-1$.
This corresponds to identifying the exterior stretch of the path of $a$'s to the interior one.
Therefore, the $2$-complex $C_i$ is homeomorphic to $K_P$ and, in particular, it is contractible. Note that the procedure of identifications that we just described is analogous to the one for $D_1$ (see Figure \ref{figthree}). We give an explicit description of the $1$-skeleton of $C_i$ when $i$ is odd. It has:
\begin{itemize}
    \item $i$ vertices $v_0,\ldots,v_{i-1}$;
    \item $i$ oriented edges $a_1,\ldots,a_i$, where $a_j$ goes from $v_j$ to $v_{j-1}$, and $a_i$ goes from $v_0$ to $v_{i-1}$;
    \item $i$ oriented edges $b_0,\ldots,b_{i-1}$ where $b_j$ goes from $v_{(2j \mod i)}$ to $v_j$.
\end{itemize}

\begin{figure}[ht]
\begin{tikzpicture}
\node at (0,0) {\includegraphics[scale=0.4]{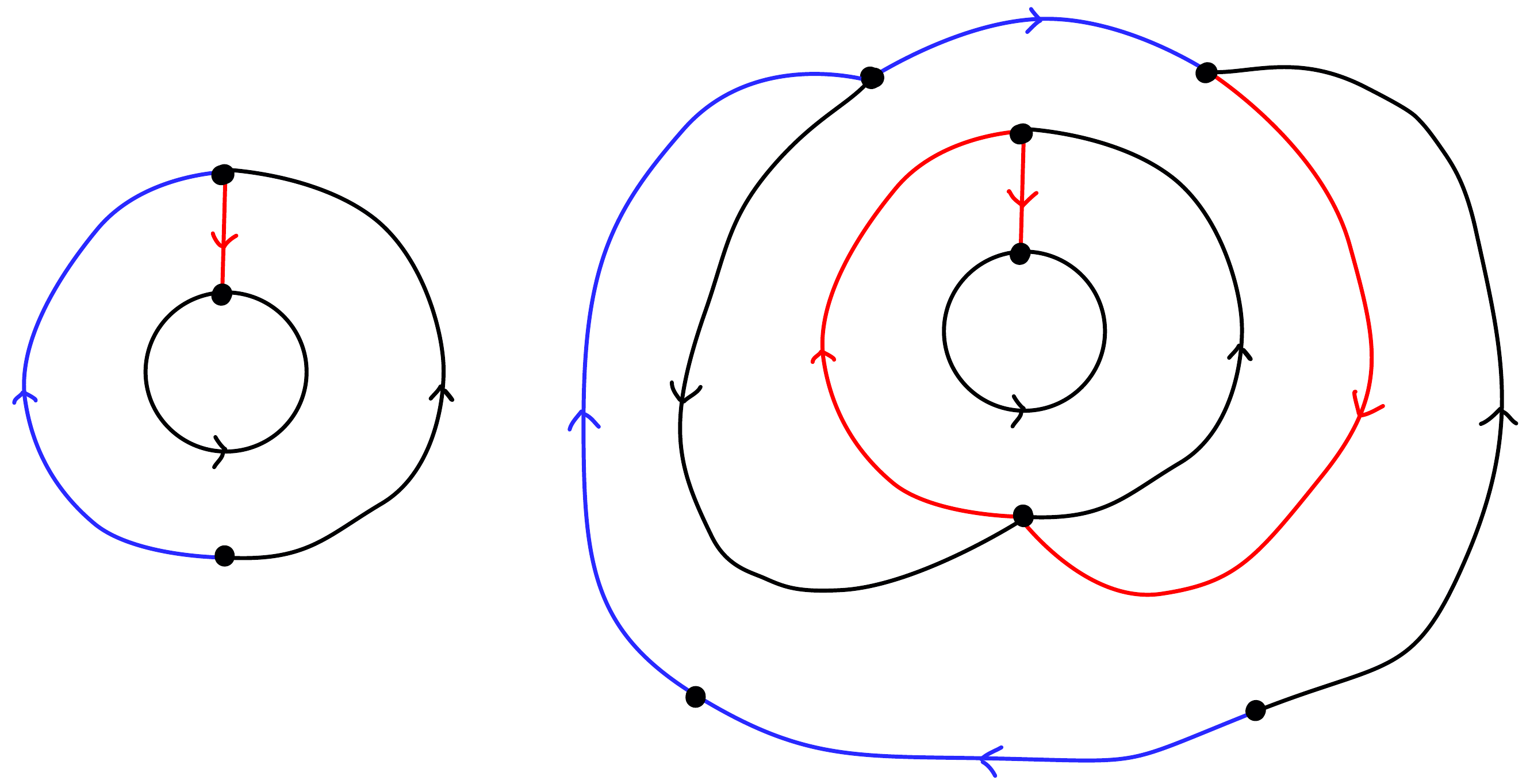}};

\node at (-5.3,2) {\Large{$D_0$}};
\node at (-0.7,2.9) {\Large{$D_3$}};

\node at (-4.15,1.05) {\textcolor{red}{$a_1$}};
\node at (-5.55,-0.05) {\textcolor{blue}{$a_2$}};

\node at (1.5,1.3) {\textcolor{red}{$a_1$}};
\node at (0.1,0.25) {\textcolor{red}{$a_2$}};
\node at (4.6,-0.2) {\textcolor{red}{$a_3$}};
\node at (2,2.85) {\textcolor{blue}{$a_4$}};
\node at (-1.56,-0.3) {\textcolor{blue}{$a_5$}};
\node at (1.7,-2.9) {\textcolor{blue}{$a_6$}};
\end{tikzpicture}
\caption{We identify the blue and red paths in $D_1$ and $D_3$.}
\label{figthree}
\end{figure}

\begin{remark}\label{2saturated}
Every edge labeled by $a$ in $C_i$ is a face of three $2$-cells of type $2$ (counted with  multiplicity).
Also, every edge labeled by $b$ is adjacent to two $2$-cells of type $2$ (again, counted with  multiplicity). This implies that every vertex has degree 4 and it is adjacent to two edges labeled with the letter $a$ (with opposite orientations) and two edges labeled with the letter $b$ (with opposite orientations).
\end{remark}

We now deal with the case where $i$ is even.
We denote by $odd(i)$ the largest odd divisor of $i$.
After identifying $b_i$ with $b_0$, we must also identify $b_i$ with $b_{i/2}$, $b_{i/2}$ with $b_{i/4}$, and so on, until we get to $b_{odd(i)}$.
Then by following the path of $a$'s we also have to identify $v_j$ with $v_k$ if $j\equiv k \mod odd(i)$.
Therefore, we get that $C_i=C_{odd(i)}$.

If we start the process of constructing the immersions by coupling the first $2$-cells of type $2$ along the other $b$ (the first appearance of $b$ in $bab^{-1}a^{-2}$), we obtain immersions $\tilde{D}_i$ and $\tilde{C}_i$, where the only difference with the previous ones is that the orientations of all the $a$'s are reversed.
The preceding observations and all of the following results hold for these immersions in a completely analogous manner.

Our goal now is to show that any immersion over $K_P$ that uses $2$-cells of both types and that does not have free faces must be a $C_i$ or a $\tilde{C}_i$ for some $i$, and therefore contractible.
For that we need the following lemmas.

\begin{lem}\label{iC}
    If we identify two vertices of an immersion $C_i$ and fold, we obtain an immersion $C_k$ for some $k<i$.
\end{lem}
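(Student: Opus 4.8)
The plan is to encode the combinatorics of $C_i$ by the ring $\Z/i$ and to show that folding after a vertex identification realizes a ring quotient $\Z/i \twoheadrightarrow \Z/k$. Since $C_i = C_{odd(i)}$, I may assume $i$ is odd. Using the explicit description of the $1$-skeleton, I identify the vertex set of $C_i$ with $\Z/i$ via $v_j \leftrightarrow j$; then the $a$-edges record the map $j \mapsto j-1$ (the edge $a_j$ runs from $v_j$ to $v_{j-1}$) and the $b$-edges record the doubling map (the edge $b_j$ runs from $v_{2j}$ to $v_j$). By Remark~\ref{2saturated} every vertex has degree $4$, with exactly two $a$-edges of opposite orientation and two $b$-edges of opposite orientation, so this local picture is the same at every vertex.

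Next I would analyze the folding. Suppose we identify $v_s$ with $v_t$ and set $d = s-t \neq 0$ in $\Z/i$. Folding repeatedly identifies parallel edges at the merged vertices, and by the uniform local structure from Remark~\ref{2saturated} the only available moves are to fold the two outgoing (resp. incoming) $a$-edges and the two outgoing (resp. incoming) $b$-edges. Folding the $a$-edges forces $v_{s\pm 1}\sim v_{t\pm 1}$, so the resulting equivalence relation $\sim$ on $\Z/i$ is translation invariant; folding the $b$-edges forces $v_{2s}\sim v_{2t}$ and $v_{s/2}\sim v_{t/2}$, where $2^{-1}$ exists because $i$ is odd. I then observe that a translation-invariant equivalence relation on $\Z/i$ is exactly the coset relation of some subgroup $H\le \Z/i$, and that $H$ contains $d$. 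Because every subgroup of $\Z/i$ is an ideal, it is automatically closed under multiplication by $2$ and by $2^{-1}$, so the $b$-moves impose no further constraint and $H = \langle d\rangle = \langle \gcd(d,i)\rangle$.

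Finally I would identify the quotient. Writing $k = \gcd(d,i)$, the relation $x\sim y \iff x-y\in H$ is precisely congruence modulo $k$, so the folded $1$-skeleton is the image of $C_i$ under the reduction map $\Z/i \to \Z/k$. This reduction is a ring homomorphism, hence commutes with both $j\mapsto j-1$ and $j\mapsto 2j$; consequently the $a$-edges fold into the $k$-cycle $\bar{j}\mapsto \bar{j}-1$ and the $b$-edges fold into the edges running from $v_{2\bar{j}}$ to $v_{\bar{j}}$, which is exactly the $1$-skeleton of $C_k$. Since both $C_i$ and $C_k$ are saturated in the sense of Remark~\ref{2saturated}, the $2$-cells are carried along and assemble into those of $C_k$ with no further folding required, so the result is $C_k$; and as $d\not\equiv 0$ we get $k=\gcd(d,i)<i$. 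The main obstacle is the completeness of the folding analysis: I must use Remark~\ref{2saturated} in an essential way to guarantee that propagation through the $a$- and $b$-edges is the \emph{only} thing that happens, so that the induced relation is exactly $\langle d\rangle$ and the quotient is genuinely a member of the family $C_k$ rather than a complex that admits further foldings.
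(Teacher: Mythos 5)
Your proof is correct, but it takes a genuinely different route from the paper's. The paper argues by induction on $i$: after reducing (via the $a$-cycle) to identifying $v_0$ with some $v_j$, it observes that the folded complex $X$ receives an immersion from $C_j$, whose image is by the inductive hypothesis some $C_k$, and then invokes Remark \ref{2saturated} to conclude that this saturated subcomplex must be all of the connected complex $X$. You instead compute the quotient explicitly: identifying the vertices of $C_i$ (for $i$ odd, using $C_i=C_{odd(i)}$) with $\Z/i$ so that the $a$-edges realize $j\mapsto j-1$ and the $b$-edges realize doubling, you show that the vertex identifications forced by folding form exactly the coset relation of $\langle d\rangle=\langle \gcd(d,i)\rangle$ where $d$ is the difference of the two identified indices, because subgroups of $\Z/i$ are ideals and hence already closed under the doubling and halving moves coming from the $b$-edges. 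This buys you more than the paper proves: you identify $k$ exactly as $\gcd(d,i)$, and you avoid both the induction and the somewhat terse steps in the paper's argument (that $X$ "admits an immersion from $C_j$", and that saturation plus connectivity forces $X=C_k$ rather than a proper quotient of $C_k$). The cost is that you must carry out the completeness check carefully — that propagation along $a$- and $b$-edges at the four-valent vertices is the only source of identifications, and that the $2$-cells fold coherently onto those of $C_k$ — which you correctly flag and discharge via Remark \ref{2saturated} and the fact that $C_k$ is itself already an immersion over $K_P$. Both arguments are sound; yours is longer but more informative and self-contained.
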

\begin{proof}
We proceed by induction on $i$.
The base case is clear, so let us assume that it holds for $C_l$ with $l<i$.
Notice that the $a$'s form an oriented cycle that traverses every vertex of $C_i$.
Since we are folding afterwards, we can assume that we are identifying $v_0$ with some $v_j$ with $0<j<i$. Let $X \looparrowright K_P$ be the immersion obtained after identifying $v_0$ with $v_j$ and folding. Then $X$ admits an immersion from $C_j$, and therefore it has a quotient of $C_j$ as a subcomplex,  which by induction is some $C_k$. Since $X \looparrowright K_P$ is an immersion and $X$ is connected, by Remark \ref{2saturated} $X$ is equal to $C_k$. 
\end{proof}

\begin{cor}\label{rigid}
    Let $f:X\looparrowright K_P$ be an immersion. Suppose that there exists an immersion $g:C_i\looparrowright K_P$ that factors through $f$ (i.e., there exists an immersion $h:C_i\looparrowright X$ such that $g=f\circ h$). Then $X=C_k$ for some $k$.
\end{cor}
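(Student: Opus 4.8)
The plan is to identify the image of $h$ with one of the complexes $C_k$ and then to exploit the rigidity of these complexes recorded in Remark \ref{2saturated}. Write $Y = h(C_i) \subseteq X$ for the image; since $C_i$ is connected and $h$ is an immersion, $Y$ is a connected subcomplex of $X$ and the corestriction $C_i \to Y$ is a surjective combinatorial map. My first step is to show that $Y = C_k$ for some $k$, and my second step is to show that the presence of such a saturated $C_k$ inside $X$ forces $X = C_k$.

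For the first step I would argue that $C_i \to Y$ is a composition of foldings. Indeed, $Y$ inherits from $f$ an immersion $Y \looparrowright K_P$ (the restriction of $f$ to the subcomplex $Y$), so the surjection $C_i \to Y$ can only be realized by the folding process; concretely, it is obtained by successively identifying pairs of vertices of $C_i$ and folding. Applying Lemma \ref{iC} to the first such identification replaces $C_i$ by some $C_{k_1}$ with $k_1 < i$, applying it again replaces $C_{k_1}$ by some $C_{k_2}$ with $k_2 < k_1$, and so on. As the index strictly decreases at every step, the process terminates and yields $Y = C_k$ for some $k$ (with $k = i$ when $h$ is injective). In particular $C_k$ sits inside $X$ as a subcomplex.

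For the second step I would use Remark \ref{2saturated}. In $C_k$ every vertex already has degree $4$ --- the maximal degree allowed by an immersion into $K_P$, since both $a$ and $b$ are loops at the unique vertex of $K_P$ --- and every edge is incident to the maximal number of $2$-cells permitted by $f$ (three type-$2$ cells along each $a$-edge, and two type-$2$ cells together with the single type-$1$ cell on the loop $b_0$ along the $b$-edges). Consequently no edge or $2$-cell of $X$ can be attached along a cell of $C_k$ without violating the local injectivity of $f$. Since $X$ is connected and $C_k \subseteq X$, this leaves no room for any cell of $X$ outside $C_k$, whence $X = C_k$, as desired. This is exactly the saturation argument already used to close the proof of Lemma \ref{iC}.

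The step I expect to be the crux is showing that $Y$, being a quotient of $C_i$, must itself be some $C_k$. One has to verify that the identifications forced by $h$ are genuinely built from single vertex-identifications-and-foldings, so that Lemma \ref{iC} can be iterated, and that no folding ever drives us outside the family of complexes $C_k$ or produces an unexpected collapse of a $2$-cell. This is precisely where the rigidity recorded in Remark \ref{2saturated} does the essential work.
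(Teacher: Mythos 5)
Your argument is correct and is essentially the paper's proof unpacked: the paper disposes of this corollary with ``It follows from Lemma \ref{iC} and its proof,'' meaning exactly your two steps --- iterate Lemma \ref{iC} to see that the folded image of $C_i$ in $X$ is some $C_k$, then invoke the saturation of $C_k$ from Remark \ref{2saturated} together with connectedness to force $X=C_k$. The only imprecision is your parenthetical on the $b$-edges: for $j\neq 0$ the edge $b_j$ carries only two of the three possible sides, and what blocks an extra type-$1$ cell there is that $b_j$ is not a loop, not that the edge is fully saturated --- but this is the same level of detail at which the paper itself leaves Remark \ref{2saturated}.
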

\begin{proof}
    It follows from Lemma \ref{iC} and its proof.
\end{proof}

\begin{lem}\label{bibj}
If we identify $b_i$ to another $b_j$ in $D_i$ and fold, we obtain $C_k$ for some $k$. 
\end{lem}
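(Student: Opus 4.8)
The plan is to reduce the statement to Corollary \ref{rigid} by producing an immersed copy of $C_{i-j}$ inside the folded complex. Write $f\colon X\looparrowright K_P$ for the immersion obtained from $D_i$ by identifying $b_i$ with $b_j$ (with $0\le j<i$) and folding. A direct check of the construction shows that the $k$-th coupled cell $R_k$ has attaching word $b_k\,a_k\,b_{k-1}^{-1}\,a_{2k-1}^{-1}\,a_{2k}^{-1}$; thus $b_k$ is traversed positively (as the initial $b$ of the relator) only in $R_k$ and negatively in $R_{k+1}$, while $b_0$ is traversed positively only in the type $1$ cell. If $j=0$ the identification is by definition the one producing $C_i$, so we may assume $j\ge 1$ and aim to show that $b_{i-j}$ becomes identified with $b_0$ in $X$.

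First I would look at the edge $e_0$ of $X$ that is the common image of $b_i$ and $b_j$. Both $R_i$ and $R_j$ traverse $e_0$ positively as the initial letter of the relator, so they present the \emph{same} corner of the type $2$ cell of $K_P$ around $e_0$. Since $f$ is an immersion, local injectivity at $e_0$ forces $R_i$ and $R_j$ to be identified in $X$. Matching their attaching words from this common edge then forces the negatively traversed $b$-letters to agree, i.e. $b_{i-1}=b_{j-1}$ in $X$ (and, incidentally, $a_i=a_j$, $a_{2i-1}=a_{2j-1}$, $a_{2i}=a_{2j}$, which we will not need).

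Next I would iterate. Assuming $b_{i-t}=b_{j-t}$ in $X$ for some $0\le t\le j-1$, the cells $R_{i-t}$ and $R_{j-t}$ both exist (here $1\le j-t$ and $1\le i-t\le i$) and both traverse the common edge positively, so the same argument identifies them and yields $b_{i-t-1}=b_{j-t-1}$. Running this induction down to $t=j-1$ produces $b_{i-j}=b_0$ in $X$, and the process correctly terminates there because $b_0$ is not the initial letter of any type $2$ cell. Since $D_{i-j}$ is a subcomplex of $D_i$, the composite $D_{i-j}\hookrightarrow D_i\to X$ now sends $b_{i-j}$ and $b_0$ to the same edge, hence factors through $C_{i-j}=D_{i-j}/(b_{i-j}\sim b_0)$, giving a map $h\colon C_{i-j}\to X$. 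Because $f\circ h$ is the standard immersion of $C_{i-j}$ into $K_P$ and $f$ is an immersion, $h$ is locally injective, so $C_{i-j}\looparrowright K_P$ factors through $f$; Corollary \ref{rigid} then gives $X=C_k$ for some $k$, which is exactly the claim.

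I expect the delicate step to be the justification that the immersion condition genuinely forces each successive pair $R_{i-t},R_{j-t}$ of type $2$ cells to be folded together, rather than merely to share a single edge. This is where one must carefully account, in the spirit of Remark \ref{2saturated}, for which cell-corners sit around a given $b$-edge and argue that two distinct corners cannot map to the same corner of $K_P$ under an immersion. The remaining subtlety is to check that the folding introduces no extra identification that would collapse the chain prematurely before reaching $b_0$; but even if additional identifications occur, they only make $X$ a further quotient, which is harmless since Corollary \ref{rigid} is applied to the immersion $h$ and not to an isomorphism.
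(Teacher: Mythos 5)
Your proposal is correct and follows essentially the same strategy as the paper: deduce that $b_{i-j}$ becomes identified with $b_0$, obtain an immersion of $C_{i-j}$ that factors through the folded complex, and conclude via Corollary \ref{rigid}. The only difference is mechanical: the paper propagates the identification through the vertices along the path of $a$'s (yielding $v_m\sim v_n$ for $m\equiv n \bmod (i-j)$), whereas you propagate it through the type $2$ cells using local injectivity at the $b$-edges; both are standard consequences of folding and yield the same chain of identifications.
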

\begin{proof}
If we identify $b_i$ with $b_0$, we obtain  $C_i$ by definition.
Suppose we identify $b_i$ with $b_j$ with $j\neq 0$.
Then we must identify $v_m$ with $v_n$ for $m\equiv n  \mod (i-j)$.
As a consequence, we identify $b_{i-j}$ with $b_0$.  Let $X \looparrowright K_P$ be the immersion obtained after these identifications and foldings. Then there is an immersion from $C_{i-j}$ to $X$, and by Corollary \ref{rigid}, $X=C_k$ for some $k$.
\end{proof}

\begin{lem}\label{iiD}
If we couple a $2$-cell to $b_i$ in $D_i$, we obtain $D_i$, $D_{i+1}$ or $C_i$.
\end{lem}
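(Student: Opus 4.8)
The plan is to enumerate the ways a single $2$-cell can be coupled to the free face $b_i$ and to perform the fold in each case. Since each $b$-edge of $K_P$ carries a fixed orientation, coupling along $b_i$ means identifying one of the new cell's $b$-edges with $b_i$ compatibly with orientation, so there are exactly three possibilities: a type $2$ cell glued with $b_i$ as the second occurrence of $b$ in $bab^{-1}a^{-2}$ (the one appearing as $b^{-1}$); a type $2$ cell glued with $b_i$ as the first occurrence; or a type $1$ cell, whose boundary is a single $b$-loop. Assuming $i\geq 1$, so that $D_i$ already contains a $2$-cell of type $2$, I claim these three couplings yield $D_{i+1}$, $D_i$ and $C_i$, respectively.

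The first case is precisely the construction step producing $D_{i+1}$ from $D_i$, so nothing new is required. For the second case I would first record that the last $2$-cell of $D_i$ has boundary word $b_ia_ib_{i-1}^{-1}a_{2i-1}^{-1}a_{2i}^{-1}$. Gluing a new type $2$ cell with $b_i$ as its first $b$ then forces a chain of folds: the two outgoing $a$-edges at $v_i$ are identified (the new one folding onto $a_i$), likewise at $v_{2i}$ (onto $a_{2i}$), and these propagate to fold the remaining new $b$- and $a$-edges onto $b_{i-1}$ and $a_{2i-1}$. The attaching map of the new cell thereby becomes word-for-word that of the last cell of $D_i$, the two $2$-cells are identified during the fold, and we recover $D_i$ with $b_i$ still free.

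The substantial case is the type $1$ coupling, which I expect to be the main obstacle. Gluing a disc with $b$-loop boundary along $b_i$ forces the endpoints $v_{2i}$ and $v_i$ of $b_i$ to coincide; I would then follow the induced cascade along the oriented cycle of $a$'s, where $v_{2i}=v_i$ folds $a_{2i}$ onto $a_i$ and forces $v_{2i-1}=v_{i-1}$, iterating to give $v_{2i-k}=v_{i-k}$ for $0\leq k\leq i$ and in particular $v_i=v_0$. At that point $b_i$ and $b_0$ are both $b$-loops at the single vertex $v_0$, so they and their type $1$ cells fold together; the outcome is $D_i$ with $b_i$ identified to $b_0$, which is $C_i$ by definition (for $i$ even the cascade triggers further $b$-folds and collapses to $C_{\mathrm{odd}(i)}=C_i$). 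The delicate point here is to confirm that the cascade terminates exactly at $C_i$, neither stopping short nor over-collapsing; I would secure this by matching the forced identifications to the definition of $C_i$ and using Remark \ref{2saturated} to identify the fully folded immersion with the $4$-valent complex $C_i$. A final routine check, vertex by vertex, confirms that no unexpected folds occur in the first two cases.
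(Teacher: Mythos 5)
Your proposal is correct and follows essentially the same route as the paper: the same three-way case split (type $1$ cell giving $C_i$, type $2$ cell along the second occurrence giving $D_{i+1}$, type $2$ cell along the first occurrence folding back onto the last cell of $D_i$). The paper's own proof is just a two-sentence statement of these cases; your version supplies the folding cascades that the paper leaves implicit, and your restriction to $i\geq 1$ matches how the lemma is actually used.
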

\begin{proof}
If the $2$-cell is of type $1$, then we must identify $b_i$ with $b_0$ and we obtain $C_i$.
If the $2$-cell is of type $2$ and we couple it to $b_i$, then depending on which appearance  of $b$ we use, we can get either $D_i$ or $D_{i+1}$.
\end{proof}

Now we can prove the main result of these notes.

\begin{proof}[Proof of Theorem \ref{main}]
Let $X\looparrowright K_P$ be an immersion of a finite, connected $2$-complex without free faces. As we mentioned above, we can assume that it has $2$-cells of both types.
We will construct an immersion of some $C_i$ (or $\tilde{C}_i$)  to $K_P$ that factors through  $X\looparrowright K_P$ as in Corollary \ref{rigid}.
Since $X$ has both types of $2$-cells, it has at least one of type $1$.
We start to build our immersion from this $2$-cell.
Since $X$ has no free faces, the boundary of this $2$-cell must be in the boundary of another $2$-cell of $X$ and, since it is an immersion, the cell must be of type $2$.
We assume that it is attached along the second $b$ (in the other case, we proceed similarly and end up with a $\tilde{C}_i$).
We couple it to the immersion to obtain $D_1$.
Now $D_1$ has a free face labeled by $b_1$. 
Again, since $X$ has no free faces, either this $b_1$ is identified with $b_0$ in $X$, or there is another $2$-cell in $X$ which has it as a face.
If it is identified with $b_0$, we get $C_1$ and we are done.
If it is not, then by Lemma \ref{iiD}, we get $C_1$ or $D_2$.
If we get $D_2$ we continue with this process. Since $X$ is finite and has no free faces, then by Lemmas \ref{bibj} and \ref{iiD}, we must eventually arrive to some $C_i$.
Therefore, by Corollary \ref{rigid} $X$ is a $C_k$, and hence contractible.
\end{proof}

\end{document}